\documentclass[12pt,reqno]{amsart}
\usepackage{amsmath}
\usepackage[english,activeacute]{babel}
\usepackage{inputenc}
\usepackage{amssymb}
\usepackage{amsthm}
\usepackage{mathtools}
\usepackage{graphics,graphicx,tikz}
\usepackage{array}
\usepackage{a4wide}
\allowdisplaybreaks
\usepackage{color, url}
\usepackage{float}
\usepackage{multicol}
\usepackage[shortlabels]{enumitem}
\usepackage[colorlinks=true, citecolor=red, linkcolor=blue, urlcolor=blue]{hyperref}
\theoremstyle{plain}
\newtheorem{theorem}{Theorem}[section]

\theoremstyle{definition}

\newtheorem{remark}[theorem]{Remark}

\begin{document}
\title{$q$-Analogues of some supercongruences related to generalized Van Hamme-type supercongruences}
	\author{Liton Karmakar}
    \address{Department of Mathematics, National Institute of Technology Silchar, Assam 788010, India}
	\email{litonofficial8638@gmail.com}

	\thanks{\textit{Mathematics Subject Classification.}  33D15 · 11B65 · 05A10.\\
	\textit{Keywords and phrases: $q$-Analogue, Cyclotomic polynomial, $q$-Supercongruences}}
	
\begin{abstract}
Recently, Jana and Kalita (Res. Number Theory $\textbf{8}~ (2022),$ Article $54$) obtained certain supercongruences motivated by some generalized Van Hamme type supercongruences, specifically for an integer $\ell \geq 2$ and an odd prime $p$ with $p \equiv -1 \pmod{\ell},$
    \begin{align*}
    \displaystyle \sum_{n=0}^{\frac{p^v + 1}{\ell}} (-1)^n (2 \ell n - 1) (\ell^2 n^2 - \ell n + 1) \frac{(-\frac{1}{\ell})_n^3}{(1)_n^3} \equiv (-1)^{\frac{p^v + p + 2}{\ell}} p^{3v} \pmod{p^{3v+1}},
\end{align*}
and
\begin{align*}
    \displaystyle \sum_{n=0}^{\frac{p^v + 1}{\ell}}  (2 \ell n - 1) (2 \ell^2 n^2 - 2 \ell n + 1) \frac{(-\frac{1}{\ell})_n^4}{(1)_n^4} \equiv - p^{4v} \pmod{p^{4v+1}}.
\end{align*}
Employing the $q$-telescoping technique, similar to the $q$-WZ method, we here establish some supercongruences involving certain $q$-shifted factorials. As particular cases, we provide $q$-analogues of the above supercongruences.
\end{abstract}

\maketitle

\section{Introduction and Statement of Results}\label{section1}

  In $1997$, Van Hamme \cite{hamme} observed numerous conjectural supercongruences relating certain truncated hypergeometric sums  to the values of the $p$-adic gamma function, including Ramanujan\rq s \cite{ramanujan} identity
 \begin{align*}
 \sum_{n=0}^{\infty} \left(-1\right)^n \left(4n+1\right) \frac{\left(\frac{1}{2}\right)_{n}^3}{\left(1\right)_{n}^3} = \frac{2}{\pi}  
 \end{align*}
 admits a nice $p$-adic analogue
 \begin{equation} \label{padic B2}
 \sum_{n=0}^{\frac{p-1}{2}} \left(-1\right)^n \left(4n+1\right) \frac{\left(\frac{1}{2}\right)_{n}^3}{\left(1\right)_{n}^3} \equiv \left(-1\right)^{\frac{p-1}{2}}p \pmod{p^3}.
 \end{equation} 
Using a $_6F_5$ hypergeometric series identity, Mortenson \cite{Mortenson2008} was the first to prove the supercongruence \eqref{padic B2}. Subsequently, Zudilin \cite{Zudilin2009} reconfirmed this result using the WZ-method \cite{PWZ}, and  Long \cite{Long2011} provided further validation via certain hypergeometric series identities and evaluations. In \cite{Guo2018}, Guo  provided the following generalization of \eqref{padic B2} by employing the $q$-WZ method :
\begin{equation} \label{P5 Eqn 1}
\sum_{n=0}^{\frac{p^v-1}{2}} \left(-1\right)^n \left(4n+1\right) \frac{\left(\frac{1}{2}\right)_{n}^3}{\left(1\right)_{n}^3} \equiv p^v \left(-1\right)^{\frac{\left(p-1\right)v}{2}} \pmod{p^{v+2}}.
\end{equation}
Additionally, in \cite{Guo2017}, Guo effectively presented another generalization of \eqref{padic B2} by utilizing the powerful WZ method:
\begin{equation}\label{P5 Eqn 2}
\sum_{n=0}^{\frac{p-1}{2}}
(-1)^n(4n+1)^3
\frac{\left(\frac{1}{2}\right)_n^3}{(1)_n^3}
\equiv
-3p(-1)^{\frac{p-1}{2}}
\pmod{p^2}.
\end{equation}

Subsequently, Jana and Kalita \cite{JanaKalita2019} expanded upon \eqref{P5 Eqn 2}, presenting a more generalized one as follows:
\begin{equation} \label{JK B2 eq 1}
\sum_{n=0}^{\frac{p^v-1}{2}} \left(-1\right)^n \left(4n+1\right)^3 \frac{\left(\frac{1}{2}\right)_{n}^3}{\left(1\right)_{n}^3} \equiv -3p^v \left(-1\right)^{\frac{\left(p-1\right)v}{2}} \pmod{p^{v+2}}.
\end{equation}

In recent years, many scholars have proposed further generalizations of the supercongruence results related to \eqref{P5 Eqn 1} and \eqref{JK B2 eq 1}. For instance, interested readers can refer to \cite{Guo2025B2C2, JanaKalita2019, JanaKalita2021, JanaKarmakar2025}. In line with this, Jana and Kalita \cite{JanaKalita2021} confirmed the following supercongruence conjecture proposed by Guo \cite[Conjecture $4.2$]{Guo2017}, employing a simple telescoping method:
\begin{align} \label{Guo Conjecture B2}
\displaystyle \sum_{n=0}^{\frac{p^{v}-1}{2}} (-1)^n(4n+1)(4n^2 + 2n + 1)\frac{\left(\frac{1}{2}\right)_n^3}{(1)_n^3} \equiv p^{3v}\pmod{p^{3v+1}}.
\end{align}
It is also important to note that there is a similar family of supercongruences associated with \eqref{padic B2} as
\begin{equation} \label{P5 Eqn 3}
\sum_{n=0}^{\frac{p+1}{2}} \left(-1\right)^n \left(4n-1\right) \frac{\left(-\frac{1}{2}\right)_n^3}{\left(1\right)_n^3} \equiv \left(-1\right)^{\frac{p+1}{2}} p \pmod{p^3}.
\end{equation}
In $2022$, motivated by the conjectural supercongruence \eqref{Guo Conjecture B2}, Jana and Kalita \cite{JanaKalita2022} established similar type of supercongruence related to \eqref{P5 Eqn 3} as follows: for integers $v \geq 1$ and $\ell \geq 2,$ and an odd prime $p$ with $p \equiv -1 \pmod{\ell},$
    \begin{align} \label{P5 Task 1}
    \displaystyle \sum_{n=0}^{\frac{p^v + 1}{\ell}} (-1)^n (2 \ell n - 1) (\ell^2 n^2 - \ell n + 1) \frac{(-\frac{1}{\ell})_n^3}{(1)_n^3} \equiv (-1)^{\frac{p^v + p + 2}{\ell}} p^{3v} \pmod{p^{3v+1}}.
\end{align}
Throughout we assume $q$ to be fixed $0 < |q|<1.$ For complex number $a$, the \textit{$q$-shifted factorial} is defined by $$(a;q)_0:=1, (a;q)_n:=\prod_{k=0}^{n-1}(1-aq^k) \text{
for~} n\geq 1, \text{~and~}(a;q)_\infty := \prod_{k=0}^{\infty}(1-aq^k).$$
The \textit{$q$-shifted factorial} for negative index is defined as
\begin{center}
    $\displaystyle (a;q)_{-n}:= \prod_{k=1}^{-n} \left( 1-\frac{a}{q^k} \right).$
\end{center}
Moreover, $
[m]=1+q+q^2+\cdots+q^{\,m-1}
=\frac{1-q^m}{1-q}$ represents the $q$-integer, and the $m$-th cyclotomic polynomial $\Phi_m(q)$ is defined by
$$\Phi_m(q):=\prod_{\substack{1\leq d\leq m\\ \gcd(d,m)=1}}
\left(q-\eta^d\right),$$
where $\eta$ denotes a primitive $m$-th root of unity. It is well known that
$\Phi_m(q)$ is a polynomial in $q$ with integer coefficients. Furthermore,
$$\prod_{\substack{d\mid m\\ d>1}}\Phi_d(q)=[m].$$
Motivated by the aforementioned works, we shall derive some supercongruences involving certain $q$-shifted factorials. Our first result in this article is stated in the following theorem.

\begin{theorem} \label{P5-Thm 1}
     Let $r, \ell \geq 2$ be integers, and $m$ a positive integer with $m \equiv -r \pmod{\ell}$ and \textup{gcd}$(m,\ell)=1$. Let $s$ be a non-negative integer such that $m > \frac{2 \ell s + r}{\ell -1}$. Then, modulo $[m] \Phi_m(q)^2,$ 
    \begin{align} \label{P5-Thm 1 Eqn}
       & \sum_{n=s}^{\frac{m+r}{\ell} + s} (-1)^n \dfrac{(q^{-r};q^{\ell})_{n+s} (q^{-r};q^{\ell})_{n-s} (q^{-r};q^{\ell})_n}{(q^{\ell};q^{\ell})_{n+s} (q^{\ell};q^{\ell})_{n-s} (q^{\ell};q^{\ell})_n} \notag \\
       & ~~\times \left\{ [\ell (n+s) - r] [\ell(n-s)-r] [\ell n -r] + [\ell (n+s)] [\ell (n-s)] [\ell n] \right\} \equiv 0 .
    \end{align}
\end{theorem}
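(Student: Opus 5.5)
The plan is to show that the summand in \eqref{P5-Thm 1 Eqn} telescopes exactly, so that the whole sum collapses to a single boundary term, and then to read off the divisibility of that term from cyclotomic factors.

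\textbf{Step 1: the telescoping identity.} Write $a_k=(q^{-r};q^{\ell})_k/(q^{\ell};q^{\ell})_k$. The key observation is
$$\frac{a_{k+1}}{a_k}=\frac{1-q^{\ell k-r}}{1-q^{\ell k+\ell}}=\frac{[\ell k-r]}{[\ell(k+1)]}.$$
This holds for every integer $k$, using the convention for negative indices when $k=n-s<0$ does not occur, since $n\ge s$. Put $A(n)=a_{n+s}a_{n-s}a_n$ and
$$G(n)=(-1)^n[\ell(n+s)][\ell(n-s)][\ell n]\,A(n).$$
Applying the ratio above to each of the three factors gives
$$G(n+1)=-(-1)^n[\ell(n+s)-r][\ell(n-s)-r][\ell n-r]\,A(n).$$
Hence the $n$-th summand of \eqref{P5-Thm 1 Eqn} equals exactly $G(n)-G(n+1)$. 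Set $N=\frac{m+r}{\ell}+s$. The sum then equals $G(s)-G(N+1)$, and $G(s)=0$ because of the factor $[\ell(s-s)]=[0]=0$. So the sum equals
$$-G(N+1)=(-1)^N[\ell(N+s)-r][\ell(N-s)-r][\ell N-r]\,A(N).$$

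\textbf{Step 2: extract $[m]$.} Since $\ell(N-s)-r=m$, the middle bracket is exactly $[m]$. It remains to show that $[\ell(N+s)-r][\ell N-r]A(N)$ is a rational function whose denominator is coprime to $[m]$, and which is divisible by $\Phi_m(q)^2$.

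\textbf{Step 3: denominators are harmless.} Fix $d\mid m$ with $d>1$. Then $\gcd(d,\ell)=1$, so $\Phi_d$ divides $1-q^{\ell j}$ iff $d\mid j$, and $\Phi_d$ divides $1-q^{\ell j-r}$ iff $j\equiv j_0\pmod d$, where $j_0\in[0,d-1]$ satisfies $\ell j_0\equiv r\pmod d$.

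In $a_k$, the numerator factors with $j\in[0,k-1]$ divisible by $\Phi_d$ number $\lfloor (k-1-j_0)/d\rfloor+1\ge\lfloor k/d\rfloor$. The denominator factors with $j\in[1,k]$ divisible by $\Phi_d$ number exactly $\lfloor k/d\rfloor$. So each $a_k$ has no $\Phi_d$ in its reduced denominator. Since $[m]=\prod_{d\mid m,\,d>1}\Phi_d$, the factor $[m]$ from Step 2 survives.

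\textbf{Step 4: the extra $\Phi_m^2$.} For $d=m$ one needs more. The hypothesis $m>\frac{2\ell s+r}{\ell-1}$ is equivalent to $N+s<m$. Hence no denominator factor $1-q^{\ell j}$ with $j\le N+s$ is divisible by $\Phi_m$.

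\emph{Case $s\ge1$.} The numerator factor $1-q^{\ell j-r}$ with $j=N-s$ equals $1-q^m$. This value of $j$ lies in $[0,N+s-1]$ and in $[0,N-1]$, so $\Phi_m$ divides both $a_{N+s}$ and $a_N$, giving $\Phi_m^2$.

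\emph{Case $s=0$.} All three brackets equal $[m]$, so $\Phi_m^2$ comes directly from the other two brackets, and $A(N)$ has no $\Phi_m$ in its denominator.

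In either case the sum is $\equiv0\pmod{[m]\Phi_m(q)^2}$.

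The main obstacle I anticipate is the bookkeeping in Step 3, namely the $\Phi_d$-integrality of $a_k$ for proper divisors $d$ of $m$, where denominators genuinely occur. The counting inequality above should handle it, using only that $j_0\le d-1$.
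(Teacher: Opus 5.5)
Your proof is correct. Its first half coincides with the paper's: your $G$ is exactly $-\mathcal{L}$, so the sum collapses to $\mathcal{L}(N+1)$. The three factors $1-q^m$ you exhibit are the same three the paper finds in $(q^{\ell-r};q^{\ell})_{N+s}(q^{\ell-r};q^{\ell})_{N-s}(q^{\ell-r};q^{\ell})_{N}$. They are the bracket $[\ell(N-s)-r]=[m]$ and, for $s\ge1$, the numerator factor $j=N-s$ inside $a_{N+s}$ and $a_N$. In both arguments $N+s<m$ keeps $\Phi_m(q)$ out of the denominator.

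The difference is how $[m]$ is obtained. The paper extracts only $\Phi_m(q)^3$ from the boundary term. It then shows separately that the reindexed sum vanishes at every $m$-th root of unity $\eta\neq1$. To do this it cuts the range into blocks of length $d$ and uses $\sum_{n=0}^{d-1}\mathcal{S}_\eta(n)=0$ together with a ratio/periodicity relation for $\mathcal{S}_\eta$. Finally it takes the lcm of $[m]$ and $\Phi_m(q)^3$.

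You instead notice that $[m]$ is already an explicit factor of the closed form. All that is then needed is that each $a_k$ is $\Phi_d$-integral for $d\mid m$, and your count $\lfloor (k-1-j_0)/d\rfloor+1\ge\lfloor k/d\rfloor$ gives this directly. Your route is shorter and entirely elementary. It also avoids two delicate points in the root-of-unity step:
\begin{itemize}
\item that step applies the $\Phi$-statement to divisors $d$ of $m$, which need not satisfy the theorem's hypotheses;
\item it divides by $\mathcal{S}_\eta(0)$, which vanishes, for instance, when some $d\mid m$ with $d>1$ divides $r$.
\end{itemize}
It even gives the stronger modulus $[m]^3$ when $s=0$. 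The paper's block argument is the general-purpose tool for sums that do not telescope to a product visibly containing $[m]$; here the telescoping makes it unnecessary.

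One minor point: when $\ell j-r<0$, the factor $1-q^{\ell j-r}$ is a Laurent polynomial. Divisibility by $\Phi_d(q)$ should then be read up to the unit $q^{\ell j-r}$.
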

Clearly, the $m=p^v$ and $q \to 1$ case of \eqref{P5-Thm 1 Eqn} leads to the following conclusion:
\begin{align} \label{P5-Thm 1 Eqn 2}
		&\displaystyle \sum_{n=s}^{\frac{p^v + r}{\ell}+s} \left(-1\right)^{n} \left(2\ell n -r\right) \left(\ell^2n^2 - r \ell n -\ell^2 s^2 + r^2 \right) \frac{\left(-\frac{r}{\ell}\right)_{n+s} \left(-\frac{r}{\ell}\right)_{n-s} \left(-\frac{r}{\ell}\right)_{n}}{\left(1\right)_{n+s} \left(1\right)_{n-s} \left(1\right)_{n}}
		\equiv 0 \pmod{p^{v+2}}. 
	\end{align}
\begin{remark}
    Setting $r=1, s=0$ in the above congruence \eqref{P5-Thm 1 Eqn 2}, we obtain congruence \eqref{P5 Task 1} modulo $p^{v+2}$.
\end{remark}

In $2019$, Guo and Schlosser \cite{GuoSchlosser2019 JD}  obtained a notable supercongruence, in relation to \cite[(C.2)]{hamme}, by providing a $q$-analog:
\begin{equation} \label{P5 Eqn 4}
\sum_{n=0}^{\frac{p+1}{2}}  \left(4n-1\right) \frac{\left(-\frac{1}{2}\right)_n^4}{\left(1\right)_n^4} \equiv -5p^4 \pmod{p^5}.
\end{equation}
In $2020$, Guo and Liu \cite[Conjecture $5.2$]{GuoLiu2020} proposed the following general Van Hamme type supercongruence related to \eqref{P5 Eqn 4}: for any odd prime $p$ and positive odd integer $u,$ there exists an integer $a_u$ such that, for any positive integer $v,$ there hold  

\begin{equation} \label{Guo Conjecture B2 2}
    \displaystyle \sum_{n=0}^{\frac{p^{v}+1}{2}} (4n-1)^u \frac{\left(-\frac{1}{2}\right)_n^4}{(1)_n^4} \equiv a_u p^{v}\pmod{p^{v+3}}.
\end{equation}
In particular, we have $a_1= a_3=0, a_5=16, a_7=80, a_9=192, a_{11}=640, a_{13}= - 3472$ and $a_{15}=138480.$ The above supercongruences \eqref{Guo Conjecture B2 2} have been verified by Jana and Kalita \cite{JanaKalita2020} for the cases $u=1$ and $u=3.$ Additionally in \cite{JanaKalita2022}, they established a much stronger supercongruence by observing: $(4n-1)^3 + (4n-1) = 2(4n-1)(8n^2 - 4n +1)$,
\begin{align} \label{P5 Task 2}
    \displaystyle \sum_{n=0}^{\frac{p^v + 1}{\ell}}  (2 \ell n - 1) (2 \ell^2 n^2 - 2 \ell n + 1) \frac{(-\frac{1}{\ell})_n^4}{(1)_n^4} \equiv - p^{4v} \pmod{p^{4v+1}}.
\end{align}

Our second result is presented in the following theorem.
\begin{theorem} \label{P5-Thm 2}
     Let $r, \ell \geq 2$ be integers, and $m$ a positive integer with $m \equiv -r \pmod{\ell}$ and \textup{gcd}$(m,\ell)=1$. Let $s$ be a non-negative integer such that $m > \frac{2 \ell s + r}{\ell -1}$. Then, modulo $[m] \Phi_m(q)^3,$ 
    \begin{align} \label{P5-Thm 2 Eqn}
       & \sum_{n=s}^{\frac{m+r}{\ell} + s} \dfrac{(q^{-r};q^{\ell})_{n+s} (q^{-r};q^{\ell})_{n-s} (q^{-r};q^{\ell})_n^2}{(q^{\ell};q^{\ell})_{n+s} (q^{\ell};q^{\ell})_{n-s} (q^{\ell};q^{\ell})_n^2} \notag \\
       & ~~\times \left\{ [\ell (n+s) - r] [\ell(n-s)-r] [\ell n -r]^2 - [\ell (n+s)] [\ell (n-s)] [\ell n]^2 \right\} \equiv 0 .
    \end{align}
\end{theorem}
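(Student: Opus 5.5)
The plan is to prove Theorem~\ref{P5-Thm 2} by $q$-telescoping, following the pattern one expects for Theorem~\ref{P5-Thm 1}. Write the summand as $F(n)=\alpha(n)\,T(n)$, where
\[
\alpha(n)=\frac{(q^{-r};q^{\ell})_{n+s}(q^{-r};q^{\ell})_{n-s}(q^{-r};q^{\ell})_n^2}{(q^{\ell};q^{\ell})_{n+s}(q^{\ell};q^{\ell})_{n-s}(q^{\ell};q^{\ell})_n^2}
\]
and $T(n)$ is the bracketed difference.

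\textbf{Step 1: a telescoping antidifference.} Define
\[
G(n)=\frac{(q^{-r};q^{\ell})_{n+s}(q^{-r};q^{\ell})_{n-s}(q^{-r};q^{\ell})_n^2}{(q^{\ell};q^{\ell})_{n+s-1}(q^{\ell};q^{\ell})_{n-s-1}(q^{\ell};q^{\ell})_{n-1}^2}\cdot\frac{q^{c(n)}}{(1-q)^4},
\]
with a quadratic exponent $c(n)$ to be fixed. Equivalently, $G(n)=\alpha(n)[\ell(n+s)][\ell(n-s)][\ell n]^2$ up to the power of $q$. The claim to verify is
\[
G(n+1)-G(n)=F(n),
\]
possibly after replacing $F$ by $q^{\text{power}}F$ and choosing $c(n)$ to match. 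Computing $G(n+1)/\alpha(n)$ gives the product $[\ell(n+s)-r][\ell(n-s)-r][\ell n-r]^2$ times $q^{\cdots}$, since $(1-q^{\ell k-r})$ is the new factor and $(1-q^{\ell k})$ cancels. Hence $G(n+1)-G(n)$ is exactly $\alpha(n)$ times the difference in the statement, up to compatible $q$-powers. So the identity reduces to checking that the $q$-powers align. This is the main delicate point: the brackets in the statement carry no $q$-powers, so I will first check whether the factor $1-q^{\ell k-r}$ can be written as $(1-q)[\ell k-r]$ with no extra power. It can, since $\ell k-r>0$ for the relevant $k$; for $\ell k-r<0$ one uses $[-a]=-q^{-a}[a]$, but the range only involves $k\ge 1$ in the numerators after shifting. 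Any leftover power must then be absorbed by defining the $q$-integers consistently. I expect the identity $G(n+1)-G(n)=F(n)$ to hold exactly, with $c\equiv 0$.

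\textbf{Step 2: evaluate the telescoped sum.} The sum equals $G\bigl(\tfrac{m+r}{\ell}+s+1\bigr)-G(s)$. We have $G(s)=0$ because $(q^{\ell};q^{\ell})_{-1}$ in the denominator equals $(1-q^{-\ell})$, and the factor $[\ell(n-s)]=[0]=0$ kills the term. So it remains to show that $G(N+1)\equiv 0$ modulo $[m]\Phi_m(q)^3$, where $N=\tfrac{m+r}{\ell}+s$.

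\textbf{Step 3: divisibility.} Write $G(N+1)=\alpha(N)\prod(\text{numerator brackets})$. Each of $(q^{-r};q^{\ell})_{N+1}$, $(q^{-r};q^{\ell})_{N+1\pm s}$ contains the factor $1-q^{m}$, because $\ell\cdot\tfrac{m+r}{\ell}-r=m$ and the index $k=\tfrac{m+r}{\ell}$ lies within the range whenever $N+1-s>\tfrac{m+r}{\ell}$. That gives four factors of $1-q^m$ in total, the $(\cdot)_n$ term appearing squared. For the denominators $(q^{\ell};q^{\ell})_{N+s}$ and the others, the condition $m>\tfrac{2\ell s+r}{\ell-1}$ gives $\ell(N+s)<2m$. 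Since $\gcd(m,\ell)=1$, a multiple $\ell k$ with $\ell k<2m$ is divisible by $m$ only if $\ell k=m$, which is impossible. Hence $\Phi_m$ does not divide the denominators, and $\Phi_m(q)^4$ divides $G(N+1)$.

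For the full factor $[m]$, i.e.\ $\Phi_d$ with $d\mid m$, $d<m$, I would count multiples of $d$. Among $\ell k-r$ for consecutive $k$ there are at least as many multiples of $d$ as among $\ell k$ over the corresponding ranges, since $\gcd(d,\ell)=1$ and $m\equiv 0\pmod d$ with $m\equiv -r\pmod\ell$. Each block contributes at least one net $\Phi_d$ factor, and the argument in Theorem~\ref{P5-Thm 1} should carry over directly. Together these give divisibility by $[m]\Phi_m(q)^3$. The counting for $\Phi_d$ is the step I expect to be the main obstacle, particularly for the shifted ranges $n\pm s$, where a careful floor-function comparison is needed.
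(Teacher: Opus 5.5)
Your telescoping step is the paper's: with $c\equiv 0$ your $G$ is exactly the paper's $\mathcal{L}$, and $G(N+1)$ with $N=\frac{m+r}{\ell}+s$ is its closed form. No $q$-power bookkeeping is needed, since $1-q^x=(1-q)[x]$ for every integer $x$ once $[x]:=(1-q^x)/(1-q)$, including $x=-r$. Your $\Phi_m(q)^4$ count is also the paper's. You diverge on the factor $[m]$. The paper does not count $\Phi_d(q)$, $d\mid m$, in the closed form. Instead it runs a separate root-of-unity argument: the sum over $0\le n\le d-1$ vanishes at a primitive $d$-th root of unity $\eta$, and the relation $\mathcal{S}_\eta(\beta d+n)/\mathcal{S}_\eta(\beta d)=\mathcal{S}_\eta(n)/\mathcal{S}_\eta(0)$ extends this to the full range. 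Reading all cyclotomic valuations off the exact identity ``sum $=G(N+1)$'' is legitimate and shorter. It also avoids evaluating at $\eta$ summands whose denominators $(q^\ell;q^\ell)_n$ vanish there. As written, however, two of your steps are false and the decisive count is not done.

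First, $\ell(N+s)<2m$ does not follow from the hypothesis when $\ell\ge 3$: for $\ell=3$, $r=7$, $s=0$, $m=5$ one gets $\ell(N+s)=12>10$. The hypothesis is equivalent to $N+s=\frac{m+r}{\ell}+2s<m$. Since $m\mid \ell k\iff m\mid k$, no $1-q^{\ell k}$ with $1\le k\le N+s$ is divisible by $\Phi_m(q)$.

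Second, ``each block contributes at least one net $\Phi_d$ factor'' is false; it would give $[m]^4$. For $\ell=2$, $r=3$, $s=1$, $m=15$, $d=5$, the blocks with $K=N+s=11$ and $K=N=10$ contribute nothing. The correct count goes as follows. For $d\mid m$, $d>1$, we have $\gcd(d,\ell)=1$ and $\ell a\equiv r\pmod d$ with $a=\frac{m+r}{\ell}$. So $d\mid \ell j-r$ iff $j\equiv a\pmod d$, and $\ell j-r\neq 0$ because $\gcd(r,\ell)=1$. Hence the block $(q^{-r};q^\ell)_{K+1}/(q^\ell;q^\ell)_K$ has $\Phi_d$-adic valuation $\lfloor (K-a_d)/d\rfloor+1-\lfloor K/d\rfloor\in\{0,1\}$, where $a_d\in\{0,\dots,d-1\}$ and $a_d\equiv a\pmod d$. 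This valuation equals $1$ precisely when $K\bmod d\ge a_d$, in particular for $K=N-s=a$. So all four blocks are nonnegative, and the block $K=N-s$ alone supplies $\Phi_d(q)$. Together with your $\Phi_m(q)^4$ count this proves the theorem, with no comparison over the shifted ranges and no appeal to Theorem~\ref{P5-Thm 1}.

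Finally, $(q^\ell;q^\ell)_{-1}$ is not $1-q^{-\ell}$; the convention is $1/(q^\ell;q^\ell)_{-1}=0$. Your other reason for $G(s)=0$ is the one to keep: the factor $[\ell(n-s)]=[0]$ in $G(n)=\alpha(n)[\ell(n+s)][\ell(n-s)][\ell n]^2$.
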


Clearly, the $m=p^v$ and $q \to 1$ case of \eqref{P5-Thm 2 Eqn} implies the following:
\begin{align} \label{P5-Thm 2 Eqn 2}
		&\displaystyle \sum_{n=s}^{\frac{p^v + r}{\ell}+s}  \left(2\ell n -r\right) \left(2r\ell^2n^2 - 2 r^2 \ell n -r\ell^2 s^2 + r^3 \right) \frac{\left(-\frac{r}{\ell}\right)_{n+s} \left(-\frac{r}{\ell}\right)_{n-s} \left(-\frac{r}{\ell}\right)_{n}^2}{\left(1\right)_{n+s} \left(1\right)_{n-s} \left(1\right)_{n}^2}
		\equiv 0 \pmod{p^{v+3}}. 
	\end{align}
\begin{remark}
    Setting $r=1, s=0$ in the above congruence \eqref{P5-Thm 2 Eqn 2}, we obtain congruence \eqref{P5 Task 2} modulo $p^{v+3}$.
\end{remark}

\section{Proof of Theorems \ref{P5-Thm 1} and \ref{P5-Thm 2}  }\label{section 2}

\begin{proof} [Proof of Theorem \ref{P5-Thm 1}]
    For non-negative integer $n$, we consider the hypergeometric function $\mathcal{L}$ in $n$ as
    \begin{align*}
        \mathcal{L}(n)= (-1)^{n+1}\dfrac{(q^{-r};q^{\ell})_{n+s} (q^{-r};q^{\ell})_{n-s} (q^{-r};q^{\ell})_n}{(1-q)^3 (q^{\ell};q^{\ell})_{n+s-1} (q^{\ell};q^{\ell})_{n-s-1} (q^{\ell};q^{\ell})_{n-1}}.
    \end{align*}
We have
\begin{align*}
   & \mathcal{L}(n+1) - \mathcal{L}(n) \\
   & = (-1)^{n}\dfrac{(q^{-r};q^{\ell})_{n+s+1} (q^{-r};q^{\ell})_{n-s+1} (q^{-r};q^{\ell})_{n+1}}{(1-q)^3 (q^{\ell};q^{\ell})_{n+s} (q^{\ell};q^{\ell})_{n-s} (q^{\ell};q^{\ell})_{n}} \\ 
   & ~~~~+ (-1)^{n}\dfrac{(q^{-r};q^{\ell})_{n+s} (q^{-r};q^{\ell})_{n-s} (q^{-r};q^{\ell})_n}{(1-q)^3 (q^{\ell};q^{\ell})_{n+s-1} (q^{\ell};q^{\ell})_{n-s-1} (q^{\ell};q^{\ell})_{n-1}} \\
   & = (-1)^{n}\dfrac{(q^{-r};q^{\ell})_{n+s} (q^{-r};q^{\ell})_{n-s} (q^{-r};q^{\ell})_n}{(1-q)^3 (q^{\ell};q^{\ell})_{n+s} (q^{\ell};q^{\ell})_{n-s} (q^{\ell};q^{\ell})_{n}} \\
   & ~~~ \times \left\{ \dfrac{(q^{-r};q^{\ell})_{n+s+1} (q^{-r};q^{\ell})_{n-s+1} (q^{-r};q^{\ell})_{n+1}}{(q^{-r};q^{\ell})_{n+s} (q^{-r};q^{\ell})_{n-s} (q^{-r};q^{\ell})_{n}}  + \dfrac{(q^{\ell};q^{\ell})_{n+s} (q^{\ell};q^{\ell})_{n-s} (q^{\ell};q^{\ell})_n}{ (q^{\ell};q^{\ell})_{n+s-1} (q^{\ell};q^{\ell})_{n-s-1} (q^{\ell};q^{\ell})_{n-1}} \right\} \\
   & = (-1)^{n}\dfrac{(q^{-r};q^{\ell})_{n+s} (q^{-r};q^{\ell})_{n-s} (q^{-r};q^{\ell})_n}{(q^{\ell};q^{\ell})_{n+s} (q^{\ell};q^{\ell})_{n-s} (q^{\ell};q^{\ell})_{n}} \\
   & ~~~\times \left\{ [\ell (n+s) - r] [\ell(n-s)-r] [\ell n -r] + [\ell (n+s)] [\ell (n-s)] [\ell n] \right\}.
\end{align*}
Taking sum on both sides of the above equation with $n$ ranging from $s$ to $\mathcal{M},$ we get
\begin{align} \label{P5-sum 1}
    & \displaystyle \sum_{n=s}^{\mathcal{M}} (-1)^{n}\dfrac{(q^{-r};q^{\ell})_{n+s} (q^{-r};q^{\ell})_{n-s} (q^{-r};q^{\ell})_n}{(q^{\ell};q^{\ell})_{n+s} (q^{\ell};q^{\ell})_{n-s} (q^{\ell};q^{\ell})_{n}} \notag \\
    & ~~~\times \left\{ [\ell (n+s) - r] [\ell(n-s)-r] [\ell n -r] + [\ell (n+s)] [\ell (n-s)] [\ell n] \right\} \notag  \\
    & = \mathcal{L}(\mathcal{M}+1) \notag \\
    & = (-1)^{\mathcal{M}}\dfrac{(q^{-r};q^{\ell})_{\mathcal{M}+s+1} (q^{-r};q^{\ell})_{\mathcal{M}-s+1} (q^{-r};q^{\ell})_{\mathcal{M}+1}}{(1-q)^3 (q^{\ell};q^{\ell})_{\mathcal{M}+s} (q^{\ell};q^{\ell})_{\mathcal{M}-s} (q^{\ell};q^{\ell})_{\mathcal{M}}} \notag \\
    & = (-1)^{\mathcal{M}}\dfrac{ [-r]^3 (q^{\ell-r};q^{\ell})_{\mathcal{M}+s} (q^{\ell-r};q^{\ell})_{\mathcal{M}-s} (q^{\ell-r};q^{\ell})_{\mathcal{M}}}{ (q^{\ell};q^{\ell})_{\mathcal{M}+s} (q^{\ell};q^{\ell})_{\mathcal{M}-s} (q^{\ell};q^{\ell})_{\mathcal{M}}},
\end{align}
where we have used the fact that $\mathcal{L}(s)=0$ as $\frac{1}{(q^{\ell}; q^{\ell})_{-1}}=0$. Now putting $\mathcal{M}= \frac{m+r}{\ell} + s$ in the right-hand side of \eqref{P5-sum 1}, we obtain
\begin{align*}
    & (-1)^{\mathcal{M}}\dfrac{ [-r]^3 (q^{\ell-r};q^{\ell})_{\mathcal{M}+s} (q^{\ell-r};q^{\ell})_{\mathcal{M}-s} (q^{\ell-r};q^{\ell})_{\mathcal{M}}}{ (q^{\ell};q^{\ell})_{\mathcal{M}+s} (q^{\ell};q^{\ell})_{\mathcal{M}-s} (q^{\ell};q^{\ell})_{\mathcal{M}}} \\
    &= (-1)^{\frac{m+r}{\ell} + s}\dfrac{ [-r]^3 (q^{\ell-r};q^{\ell})_{\frac{m+r}{\ell} + 2s} (q^{\ell-r};q^{\ell})_{\frac{m+r}{\ell}} (q^{\ell-r};q^{\ell})_{\frac{m+r}{\ell} + s}}{ (q^{\ell};q^{\ell})_{\frac{m+r}{\ell} + 2s} (q^{\ell};q^{\ell})_{\frac{m+r}{\ell}} (q^{\ell};q^{\ell})_{\frac{m+r}{\ell} + s}} \\
    & \equiv 0 \pmod{\Phi_m(q)^3}.
\end{align*}
The last expression is valid because the $q$-shifted factorial $\left(q^{\ell -r}; q^\ell \right)_n$ can be divisible by $\Phi_m(q)$ for $n \geq \frac{m +r}{\ell}$. Additionally, $\left(q^\ell;q^\ell\right)_n$ is divisible by $\Phi_m(q)$ if $n \geq m.$ When $n<m$ and \textup{gcd}$\left(m,\ell\right)=1$, $\left(q^\ell;q^\ell\right)_n$ is always relatively prime to $\Phi_m(q)$. By combining the numerator and denominator of the above expression, the above congruence holds true due to $m > \frac{2 \ell s + r}{\ell -1}.$
\par \quad The remaining part is to show that Theorem \ref{P5-Thm 1} holds modulo $[m]$ as we know that least common multiple of $[m]$ and $ \Phi_m(q)^3$ is $ [m] \Phi_m(q)^2.$ Equivalently, it suffices to prove that
    \begin{align} \label{P5- E1}
       & \sum_{n=0}^{\frac{m+r}{\ell}} (-1)^n \dfrac{(q^{-r};q^{\ell})_{n+2s} (q^{-r};q^{\ell})_{n} (q^{-r};q^{\ell})_{n+s}}{(q^{\ell};q^{\ell})_{n+2s} (q^{\ell};q^{\ell})_{n} (q^{\ell};q^{\ell})_{n+s}} \notag \\
       & ~~\times \left\{ [\ell (n+2s) - r] [\ell n-r] [\ell (n+s) -r] + [\ell (n+2s)] [\ell n] [\ell (n+s)] \right\} \equiv 0 \pmod{[m]}.
    \end{align}
By following the above, one observes that replacing the upper limit of summation of the above congruence \eqref{P5- E1} by $m-1$ gives the following $q$-congruence:
    \begin{align} 
		& \sum_{n=0}^{m-1} (-1)^n \dfrac{(q^{-r};q^{\ell})_{n+2s} (q^{-r};q^{\ell})_{n} (q^{-r};q^{\ell})_{n+s}}{(q^{\ell};q^{\ell})_{n+2s} (q^{\ell};q^{\ell})_{n} (q^{\ell};q^{\ell})_{n+s}} \notag \\
       & ~~\times \left\{ [\ell (n+2s) - r] [\ell n-r] [\ell (n+s) -r] + [\ell (n+2s)] [\ell n] [\ell (n+s)] \right\} \equiv 0 \pmod{\Phi_m(q)}. \label{P5-E2} 
	\end{align}

    \par Let $\eta \neq 1$ be an $m$-th root of unity, not necessarily primitive. This is to say that $\eta$ is a primitive root of unity of degree $d$ such that $d$ divides $m$ and $d>1.$ There exists an integer $h$ with $0 \leq h \leq d-1$ and $\ell h \equiv -r \pmod{ d}.$ Let $\mathcal{S}_{q}(n)$ stands for the following expression:
    \begin{align*}
      \mathcal{S}_{q}(n) = &  (-1)^n \dfrac{(q^{-r};q^{\ell})_{n+2s} (q^{-r};q^{\ell})_{n} (q^{-r};q^{\ell})_{n+s}}{(q^{\ell};q^{\ell})_{n+2s} (q^{\ell};q^{\ell})_{n} (q^{\ell};q^{\ell})_{n+s}} \notag \\
       & ~~\times \left\{ [\ell (n+2s) - r] [\ell n-r] [\ell (n+s) -r] + [\ell (n+2s)] [\ell n] [\ell (n+s)] \right\}.
    \end{align*}
Now substituting $m=d$ in \eqref{P5-E2} and invoking $\dfrac{\left(q^{-r};q^\ell\right)_n}{\left(q^\ell;q^\ell\right)_n} \equiv 0 \pmod{\Phi_d(q)}$ for   $h<n\leq d-1,$ yields the following:
\begin{center}
    $\displaystyle \sum_{n=0}^{d-1} \mathcal{S}_{\eta}(n) = \sum_{n=0}^{h} \mathcal{S}_{\eta} (n)= 0.$
\end{center}
Observing the relation
\begin{center}
    $\displaystyle \dfrac{\mathcal{S}_{\eta} \left(\beta d +n \right)}{\mathcal{S}_{\eta} \left(\beta d\right)}  = \lim_{q \to \eta} \dfrac{\mathcal{S}_{q} \left(\beta d +n \right)}{\mathcal{S}_{q} \left(\beta d\right)} =\dfrac{\mathcal{S}_{\eta} \left(n \right)}{\mathcal{S}_{\eta}(0)},$
\end{center}
we have
\begin{align*}
    \displaystyle  \sum_{n=0}^{\frac{m + r}{\ell}} \mathcal{S}_{\eta}(n) & = \sum_{\beta=0}^{\frac{m - \ell h + r}{\ell d}-1} \sum_{n=0}^{d-1} \mathcal{S}_{\eta}(\beta d + n) + \sum_{n=0}^{h} \mathcal{S}_{\eta} \left( \frac{m - \ell h + r}{\ell } + n \right)\\
    & = \frac{1}{\mathcal{S}_{\eta}(0)}  \sum_{\beta=0}^{\frac{m - \ell h + r}{\ell d}-1} \mathcal{S}_{\eta} (\beta d) \sum_{n=0}^{d-1} \mathcal{S}_{\eta}(n) + \frac{1}{\mathcal{S}_{\eta}(0)} \mathcal{S}_{\eta} \left(\frac{m - \ell h + r}{\ell } \right)  \sum_{n=0}^{h} \mathcal{S}_{\eta} \left( n \right) \\
    & = 0.
\end{align*}
This implies that $\displaystyle \sum_{n=0}^{\frac{m + r}{\ell}} \mathcal{S}_{q}(n)$ is congruent to $0$ modulo $\Phi_d(q).$ Since every cyclotomic polynomial $\Phi_d(q)$ is irreducible in the ring $\mathbb{Z}[q],$ we establish that the left-hand side of \eqref{P5- E1} is congruent to $0$ modulo
\begin{center}
    $\displaystyle  \prod_{d | m , d>1} \Phi_d(q) = \left[m \right].$
\end{center}
This completes the proof.

\end{proof}

\begin{proof}[Proof of Theorem \ref{P5-Thm 2}]

        For non-negative integer $n$, we consider
    \begin{align*}
        \mathcal{L}(n)= \dfrac{(q^{-r};q^{\ell})_{n+s} (q^{-r};q^{\ell})_{n-s} (q^{-r};q^{\ell})_n^2}{(1-q)^4 (q^{\ell};q^{\ell})_{n+s-1} (q^{\ell};q^{\ell})_{n-s-1} (q^{\ell};q^{\ell})_{n-1}^2}.
    \end{align*}
We have
\begin{align*}
   & \mathcal{L}(n+1) - \mathcal{L}(n) \\
   & = \dfrac{(q^{-r};q^{\ell})_{n+s+1} (q^{-r};q^{\ell})_{n-s+1} (q^{-r};q^{\ell})_{n+1}^2}{(1-q)^4 (q^{\ell};q^{\ell})_{n+s} (q^{\ell};q^{\ell})_{n-s} (q^{\ell};q^{\ell})_{n}^2} \\
   & ~~~ - \dfrac{(q^{-r};q^{\ell})_{n+s} (q^{-r};q^{\ell})_{n-s} (q^{-r};q^{\ell})_n^2}{(1-q)^4 (q^{\ell};q^{\ell})_{n+s-1} (q^{\ell};q^{\ell})_{n-s-1} (q^{\ell};q^{\ell})_{n-1}^2} \\
   & = \dfrac{(q^{-r};q^{\ell})_{n+s} (q^{-r};q^{\ell})_{n-s} (q^{-r};q^{\ell})_n^2}{(1-q)^4 (q^{\ell};q^{\ell})_{n+s} (q^{\ell};q^{\ell})_{n-s} (q^{\ell};q^{\ell})_{n}^2} \\
   & ~~~ \times \left\{ \dfrac{(q^{-r};q^{\ell})_{n+s+1} (q^{-r};q^{\ell})_{n-s+1} (q^{-r};q^{\ell})_{n+1}^2}{(q^{-r};q^{\ell})_{n+s} (q^{-r};q^{\ell})_{n-s} (q^{-r};q^{\ell})_{n}^2}  - \dfrac{(q^{\ell};q^{\ell})_{n+s} (q^{\ell};q^{\ell})_{n-s} (q^{\ell};q^{\ell})_n^2}{ (q^{\ell};q^{\ell})_{n+s-1} (q^{\ell};q^{\ell})_{n-s-1} (q^{\ell};q^{\ell})_{n-1}^2} \right\} \\
   & = \dfrac{(q^{-r};q^{\ell})_{n+s} (q^{-r};q^{\ell})_{n-s} (q^{-r};q^{\ell})_n^2}{(q^{\ell};q^{\ell})_{n+s} (q^{\ell};q^{\ell})_{n-s} (q^{\ell};q^{\ell})_{n}^2} \\
   & ~~~ \times \left\{ [\ell (n+s) - r] [\ell(n-s)-r] [\ell n -r]^2 - [\ell (n+s)] [\ell (n-s)] [\ell n]^2 \right\}.
\end{align*}
Taking sum on both sides of the above equation with $n$ ranging from $s$ to $\mathcal{M},$ we get
\begin{align} \label{P5-sum 2}
    & \displaystyle \sum_{n=s}^{\mathcal{M}}  (-1)^{n}\dfrac{(q^{-r};q^{\ell})_{n+s} (q^{-r};q^{\ell})_{n-s} (q^{-r};q^{\ell})_n^2}{(q^{\ell};q^{\ell})_{n+s} (q^{\ell};q^{\ell})_{n-s} (q^{\ell};q^{\ell})_{n}^2} \notag \\
    &~~~ \times \left\{ [\ell (n+s) - r] [\ell(n-s)-r] [\ell n -r]^2 - [\ell (n+s)] [\ell (n-s)] [\ell n]^2 \right\} \notag \\
    & = \mathcal{L}(\mathcal{M}+1) \notag  \\
    & = \dfrac{(q^{-r};q^{\ell})_{\mathcal{M}+s+1} (q^{-r};q^{\ell})_{\mathcal{M}-s+1} (q^{-r};q^{\ell})_{\mathcal{M}+1}^2}{(1-q)^4 (q^{\ell};q^{\ell})_{\mathcal{M}+s} (q^{\ell};q^{\ell})_{\mathcal{M}-s} (q^{\ell};q^{\ell})_{\mathcal{M}}^2} \notag \\
    & = \dfrac{ [-r]^4 (q^{\ell-r};q^{\ell})_{\mathcal{M}+s} (q^{\ell-r};q^{\ell})_{\mathcal{M}-s} (q^{\ell-r};q^{\ell})_{\mathcal{M}}^2}{ (q^{\ell};q^{\ell})_{\mathcal{M}+s} (q^{\ell};q^{\ell})_{\mathcal{M}-s} (q^{\ell};q^{\ell})_{\mathcal{M}}^2}.
\end{align}
Now putting $\mathcal{M}= \frac{m+r}{\ell} + s$ in the right-hand side of \eqref{P5-sum 2}, we obtain
\begin{align*}
    & \dfrac{ [-r]^4 (q^{\ell-r};q^{\ell})_{\mathcal{M}+s} (q^{\ell-r};q^{\ell})_{\mathcal{M}-s} (q^{\ell-r};q^{\ell})_{\mathcal{M}}^2}{ (q^{\ell};q^{\ell})_{\mathcal{M}+s} (q^{\ell};q^{\ell})_{\mathcal{M}-s} (q^{\ell};q^{\ell})_{\mathcal{M}}^2} \\
    &= \dfrac{ [-r]^4 (q^{\ell-r};q^{\ell})_{\frac{m+r}{\ell} + 2s} (q^{\ell-r};q^{\ell})_{\frac{m+r}{\ell}} (q^{\ell-r};q^{\ell})_{\frac{m+r}{\ell} + s}^2}{ (q^{\ell};q^{\ell})_{\frac{m+r}{\ell} + 2s} (q^{\ell};q^{\ell})_{\frac{m+r}{\ell}} (q^{\ell};q^{\ell})_{\frac{m+r}{\ell} + s}^2} \\
    & \equiv 0 \pmod{\Phi_m(q)^4}.
\end{align*}
Following the similar argument as in the proof of Theorem \ref{P5-Thm 1}, we finish the proof.

\end{proof}

\section{Acknowledgement} 
The author is extremely grateful to Dr. Arijit Jana, for his guidance and encouragement.

	
	{\bf Declarations.} The author make the following declaration.
    \begin{enumerate}
        \item The author do not have any conflicts of interest.
        \item The author has no relevant financial or non-financial interests to disclose.
    \end{enumerate}
    

\end{document}